\documentclass{amsart}
\makeatletter

\makeatletter
\def\LaTeX{\leavevmode L\raise.42ex
\hbox{\kern-.3em\size{\sf@size}{0pt}\selectfont A}\kern-.15em\TeX}
\makeatother

\newcommand{\BibTeX}{{\rm B\kern-.05em{\sci\kern-.025emb}\kern-.08em\TeX}}

\makeatletter
\def\@currentlabel{2.1}\label{e:dispaa}
\def\@currentlabel{2.21}\label{e:dispau}
\def\@currentlabel{2.22}\label{e:dispav}
\def\@currentlabel{2.23}\label{e:dispaw}
\def\@currentlabel{2.24}\label{e:dispax}
\def\theequation{\thesection.\@arabic\c@equation}
\makeatother

\def\theequation{\arabic{section}.\arabic{equation}}

\newtheorem{theorem}{Theorem}[section]

\newtheorem{lemma}{Lemma}[section]

\newcommand{\ep}{\epsilon}
\newcommand{\htu}{\hat{u}}
\newcommand{\R}{{\mathbb R}}
\newcommand{\Om}{\Omega}

\title[Asymptotic Behavior]{Asymptotic behavior of a fourth order mean field equation with Dirichlet boundary condition}

\author[F.Robert]{Fr\'ed\'eric Robert}
\address{\noindent F. Robert -- Laboratoire J.A.Dieudonn\'e,
Universit\'e de Nice-Sophia Antipolis, Parc Valrose, 06108 Nice Cedex 2,
France}
\email{frobert@math.unice.fr}
\author[J.Wei]{Juncheng Wei}
\address{\noindent J. Wei -- Department of Mathematics,
Chinese University of Hong Kong, Shatin, Hong Kong}
\email{wei@math.cuhk.hk}
\subjclass{Primary 35B40, 35B45; Secondary 35J40}
\keywords{Asymptotic Behavior, Biharmonic Equations}

\begin{document}

\begin{abstract}
We consider asymptotic behavior of the following fourth order equation
\[ \Delta^2 u= \rho \frac{e^{u}}{\int_\Om e^{u}\, dx} \  \ \mbox{in} \ \Om, \ u= \partial_\nu u=0 \ \mbox{on} \ \partial \Omega
\]
where $\Om$ is a smooth oriented bounded domain in $\R^4$.
Assuming that $0<\rho \leq C$, we completely characterize the
asymptotic behavior of the unbounded solutions.

\end{abstract}

\date{August 26th 2007.}
\maketitle

\section{Introduction}
\setcounter{equation}{0} In this paper, we study the asymptotic
behavior of unbounded solutions for the following fourth order
mean field equation under Dirichlet boundary condition
\begin{equation}
\label{1.1}
\left\{\begin{array}{l}
\Delta^2 u= \rho \frac{ e^{u}}{ \int_\Om  e^{u}\, dx} \ \mbox{in} \ \Om,\\
u=\frac{\partial u}{\partial \nu}=0 \ \ \mbox{on} \ \partial \Om
\end{array}
\right.
\end{equation}
where $\rho>0$ and $\Omega \subset \R^4$ is a smooth oriented bounded domain.
In dimension two, the analogous problem
\begin{equation}
\label{1.2}
\left\{\begin{array}{l}
-\Delta u= \rho \frac{ e^{u}}{ \int_\Sigma  e^{u}\, dx} \ \mbox{in} \ \Sigma,\\
u=0 \ \ \mbox{on} \ \partial \Sigma
\end{array}
\right.
\end{equation}
where $\Sigma$ is a smooth bounded domain in $\R^2$, has been
extensively studied  by many authors.  Let $ (u_k, \rho_k)$ be a
unbounded  sequence of solutions  to (\ref{1.2}) with $ \rho_k
\leq C, \max_{x \in \Sigma} u_k (x) \to +\infty$. Then it has been
proved that

\medskip(P1) ({\bf no boundary bubbles}) $u_k$ is uniformly bounded near a
neighborhood of $\partial \Sigma$ (Nagasaki-Suzuki \cite{ns},
Ma-Wei \cite{mw});

\smallskip(P2) ({\bf bubbles are simple}) $\rho_k \to 8m \pi$ for some $ m
\geq 1$ and\par $ u_k (x) \to 8 \pi \sum_{j=1}^m G (\cdot,  x_j)$
in $C^2_{loc} (\Sigma \backslash \{x_1, ..., x_m\})$  (Br\'ezis-Merle
\cite{bm}, Li-Shafrir \cite{ls}, Nagasaki-Suzuki \cite{ns}, Ma-Wei
 \cite{mw}), where $G$ is the Green function of $-\Delta$ with Dirichlet boundary condition. Furthermore, it holds that
\begin{equation}
\label{mc}
   \nabla_{x}  R(x_j, x_j) + \sum_{i \not = j} \nabla_x G(x_i, x_j)=0, j=1,..., m
\end{equation}
where $R(x,y)= G(x, y)-\frac{1}{2 \pi} \log \frac{1}{|x-y|}$ is the regular part of $G(x, y)$.

\medskip\noindent On the other hand, giving $m$ points satisfying (\ref{mc}),
Baraket and Pacard \cite{bp} constructed multiple bubbling
solutions to (\ref{1.2}) when  the bubble points satisfy
nondegeneracy condition. Del Pino, Kowalczyk and Musso
\cite{dkm}  constructed multiple bubbling solutions to
(\ref{1.2}) when  the bubble points are {\it topologically
nontrivial}.  Li \cite{li} initiated the computation of Leray-Schauder degree of the solutions to (\ref{1.2}). He  showed in \cite{li} that the Leray-Schauder degree
 remains a constant for $\rho \in (8\pi (m-1), 8\pi m)$ and that the  degree depends only on the Euler number of the domain.  Chen and Lin \cite{cl1,cl2}  obtained the sharp estimates for the bubbling rate and the exact Leray-Schauder degree counting formula of all solutions to (\ref{1.2}) for all
$\rho \not\in 8\pi\mathbb{N}$. A related question connected to physics consists in adding
Dirac masses to the nonlinear parts: we refer to Bartolucci-Chen-Lin-Tarantello \cite{bclt} and to Tarantello
\cite{tarantello} for results and asymptotics in this context. \par

\medskip\noindent In \cite{w}, the second author considered the following
fourth order equation under Navier boundary condition
\begin{equation}
\label{1.3}
\left\{\begin{array}{l}
\Delta^2 u= \rho \frac{ e^{u}}{ \int_\Om  e^{u}\, dx} \ \mbox{in} \ \Om,\\
u=\Delta u=0 \ \ \mbox{on} \ \partial \Om
\end{array}
\right.
\end{equation}
where $\Omega \subset \R^4$ is a smooth and bounded domain.
Assuming that $\Omega$ is convex, the corresponding property (P1)
and (P2) are established in \cite{w}. Later, Lin and Wei \cite{lw}
considered the attainment of least energy solution and removed the
convexity assumption of \cite{w}. Therefore, property (P1) and
(P2) are established for (\ref{1.3}). Sharp estimates for the
bubbles and the computation of topological degree are contained in
\cite{lw1} and \cite{lw2}.\par
\medskip\noindent The purpose of this paper is to establish the corresponding
property (P1) and (P2) for equation (\ref{1.1}): indeed, equation \eqref{1.1} is more natural than \eqref{1.3} from the viewpoint of the Adams inequality (see \eqref{ineq:adams} below). Our main
result can be stated as follows.

\begin{theorem}
\label{main}
 Assume that $\Omega $ is a bounded smooth domain in $\R^4$. Let $(u_k, \rho_k)$ be a sequence of solutions to (\ref{1.1}) such that
\begin{equation}
0<\rho_k \leq C, \max_{x \in \Omega} u_k (x) \to +\infty.
\end{equation}
Then

(a) $ \rho_k \to 64 \pi^2  m$ for some positive integer $m$.

(b) $u_{k} $ has $m-$point blow up, i.e., there exists a set $ S =
\{ x_{1}, ..., x_{m} \}  \subset \Omega $  such that $ \{ u_{k }
\} $ have a limit $u_0 (x)$ for $ x \in \overline{\Omega}
\backslash S $, where the limit function $ u_{0} (x) $ has the
form
\begin{equation}
u_{0} (x) = 64 \pi^2  \sum_{i=1}^{m} G ( x, x_{j} )
\end{equation}
where   $G (x, y ) $ denotes the Green's function of $ \Delta^{2}
$ under the Dirichlet condition, that is
\begin{equation}
\label{green}
 \Delta^{2} G (x, y) = \delta (x-y) \ \mbox{in} \ \Om, \ G(x,y) =\partial_\nu G(x,y) =0 \ \mbox{on} \ \partial \Omega .
\end{equation}

Furthermore, blow up points $x_{j} \in \Omega \ ( 1 \leq j \leq m
)$ satisfy the following relation
\begin{equation}
\label{id} \nabla_{x } R ( x_{j}, x_{j} ) + \sum_{ l \not = j }
\nabla_{x } G ( x_{j}, x_{l} ) =0 \ ( 1 \leq j \leq m )
\end{equation}
where
\begin{equation}
\label{robin}
R (x, y) = G (x, y) + \frac{ \log | x-y|}{ 8 \pi^2  }.
\end{equation}
\end{theorem}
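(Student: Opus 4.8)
The plan is to run a blow-up analysis for $(u_k)$, adapting to the clamped ($u=\partial_\nu u=0$) fourth order problem the scheme that works for the second order equation \eqref{1.2} and for the Navier problem \eqref{1.3}. Two features make \eqref{1.1} genuinely harder: $\Delta^2$ has no maximum principle on a general domain, so the Green's function $G$ of \eqref{green} need not be positive; and the clamped conditions interact subtly with estimates near $\partial\Om$. First normalize: put $\mu_k=\int_\Om e^{u_k}\,dx$ and $f_k=\rho_k e^{u_k}/\mu_k\ge0$, so that $\Delta^2 u_k=f_k$ in $\Om$, $\|f_k\|_{L^1(\Om)}=\rho_k\le C$, and, by \eqref{green}, $u_k(x)=\int_\Om G(x,y)f_k(y)\,dy$. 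Passing to a subsequence, $f_k\,dx\rightharpoonup\mu$ as measures on $\overline\Om$, for some nonnegative bounded measure $\mu$. A Br\'ezis--Merle type $\ep$-regularity property holds: there is $\ep_0>0$, tied to the sharp constant in the Adams inequality, such that if $\int_{B_r(x_0)\cap\Om}f_k<\ep_0$ for a fixed $r$, then $u_k$ is bounded above on $B_{r/2}(x_0)\cap\Om$. Hence $S:=\{x\in\overline\Om:\mu(\overline{B_r(x)})\ge\ep_0\text{ for all }r>0\}$ is finite, $S=\{x_1,\dots,x_m\}$, and $u_k$ is bounded above on compact subsets of $\overline\Om\setminus S$. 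Combining this upper bound with the $L^1$ bound on $f_k$ and the representation formula (the kernel $G(x,\cdot)$ being integrable and continuous away from $x$ and from $S$), one first gets $u_k\to u_0:=\int_\Om G(\cdot,y)\,d\mu(y)$ pointwise on $\overline\Om\setminus S$, and then, by elliptic estimates, in $C^4_{loc}(\overline\Om\setminus S)$.

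I expect the main obstacle to be proving $S\subset\Om$, i.e.\ that no concentration takes place on $\partial\Om$ (property (P1)): one cannot reflect \eqref{1.1} across $\partial\Om$ (the natural reflections do not preserve the nonlinearity $e^{u}$) nor appeal to a maximum principle. I would combine (i) the sharp Krasovskii-type pointwise estimates on $G(x,y)$ together with the positivity of $G$ when both $x$ and $y$ lie in a fixed neighborhood of $\partial\Om$, which yield a Harnack inequality for $u_k$ there and hence an upper bound for $u_k$ near $\partial\Om$, with (ii) the biharmonic Pohozaev (Rellich) identity for \eqref{1.1}: multiplying $\Delta^2 u_k=f_k$ by $(x-x_0)\cdot\nabla u_k$, $x_0\in\partial\Om$, and integrating over $\Om$, the boundary integral collapses — using $u_k=|\nabla u_k|=0$ on $\partial\Om$ and $\Om\subset\R^4$ — to an expression of the form $c\int_{\partial\Om}\big[(x-x_0)\cdot\nu\big]\,(\partial_\nu^2 u_k)^2\,d\sigma$, while the interior integral $\int_\Om\big[(x-x_0)\cdot\nabla u_k\big]f_k$ stays bounded. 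A careful analysis of $\partial_\nu^2 u_k$ near a putative boundary concentration point then produces a contradiction; the delicate point, and what distinguishes this from the convex case treated for \eqref{1.3} (where $(x-x_0)\cdot\nu$ has a fixed sign near $x_0$), is handling general, possibly non-convex, $\Om$.

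Once $S\subset\Om$, analyze the blow-up at each interior $x_j$. Rescaling $u_k$ about points where it is comparable to its local maximum and invoking C.-S.~Lin's classification of the solutions of $\Delta^2 U=e^{U}$ on $\R^4$ with $\int_{\R^4}e^{U}<+\infty$, the limiting profile is the standard bubble, whose total mass is exactly $64\pi^2$; matching it to $u_0$ forces $\mu_k\to+\infty$, so $f_k\to0$ locally uniformly on $\overline\Om\setminus S$ and $\mu$ is supported on $S$. One then shows the blow-up is \emph{simple} in the Li--Shafrir sense: $\lim_k\int_{B_\delta(x_j)}f_k$ is a positive integer multiple of $64\pi^2$ and in fact equals $64\pi^2$ — no bubble towers, no clustering — via pointwise ``selection of bubbles'' estimates together with a local Pohozaev identity forcing the neck contribution to vanish. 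Consequently $\mu=64\pi^2\sum_{j=1}^m\delta_{x_j}$, hence $u_0=64\pi^2\sum_{j=1}^m G(\cdot,x_j)$ and $\rho_k=\int_\Om f_k\to64\pi^2 m$, which is (a) and (b).

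For the balancing relations \eqref{id}, fix $x_j$ and $\delta>0$ with $\overline{B_\delta(x_j)}\subset\Om$ containing no other point of $S$. Since $f_k\,\partial_{x_i}u_k=\tfrac{\rho_k}{\mu_k}\,\partial_{x_i}(e^{u_k})$ is a divergence, multiplying $\Delta^2 u_k=f_k$ by $\partial_{x_i}u_k$ and integrating over $B_\delta(x_j)$ gives
\begin{equation*}
\int_{\partial B_\delta(x_j)}\tfrac{\rho_k}{\mu_k}\,e^{u_k}\,\nu_i\,d\sigma
=\int_{\partial B_\delta(x_j)}\Big[\tfrac12(\Delta u_k)^2\nu_i-\Delta u_k\,\partial_\nu\partial_{x_i}u_k+\partial_\nu(\Delta u_k)\,\partial_{x_i}u_k\Big]\,d\sigma .
\end{equation*}
The left-hand side tends to $0$ since $f_k\to0$ uniformly on $\partial B_\delta(x_j)$, and by $C^4_{loc}$ convergence the right-hand side tends to the same expression with $u_k$ replaced by $u_0$. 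Writing $u_0(x)=-8\log|x-x_j|+64\pi^2\Phi(x)$ near $x_j$, where $\Phi=R(\cdot,x_j)+\sum_{l\ne j}G(\cdot,x_l)$ is smooth and $R$ is as in \eqref{robin}, the purely singular and purely smooth contributions to the boundary integral vanish by parity as $\delta\to0$, while the singular-times-smooth cross terms converge to a nonzero multiple of $\nabla\Phi(x_j)=\nabla_x R(x_j,x_j)+\sum_{l\ne j}\nabla_x G(x_j,x_l)$. Letting $\delta\to0$ therefore yields \eqref{id}.
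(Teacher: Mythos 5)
Your overall scaffolding (concentration set via $\epsilon$-regularity, Green's representation, rescaling to Lin's standard bubble, local Pohozaev for the balancing conditions) matches the paper, but the step you yourself flag as ``the main obstacle'' --- excluding boundary concentration for non-convex $\Omega$ --- is where the proposal has a genuine gap, and neither of your two suggested ingredients resolves it. Item (i), a Harnack inequality from positivity of $G$ when both arguments lie near $\partial\Omega$, is not available: the paper emphasizes precisely that the clamped biharmonic Green's function can change sign (referencing Dall'Acqua--Sweers and Grunau--Sweers), and even local positivity of $G$ for $|x-y|$ small does not give the two-sided Harnack bound you need on a fixed boundary neighborhood, nor is $u_k$ of one sign there. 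Item (ii), the Pohozaev identity, is the right tool, but in the form you wrote it (global over $\Omega$, centered at a fixed $x_0\in\partial\Omega$) you are left with the boundary term $\int_{\partial\Omega}\langle x-x_0,\nu\rangle(\Delta u_k)^2\,d\sigma$ whose sign you cannot control when $\Omega$ is not convex --- you acknowledge this is ``delicate'' but offer no way out. The paper's key device, which your proposal is missing, is to run the Pohozaev identity \emph{locally} on $\Omega\cap B_r(x_0)$ with a \emph{$k$-dependent, shifted center} $y_k=x_0+\rho_{k,r}\nu(x_0)$, where $\rho_{k,r}$ is the weighted average
\[
\rho_{k,r}=\frac{\int_{\partial\Omega\cap B_r(x_0)}\langle x-x_0,\nu\rangle(\Delta u_k)^2\,d\sigma}{\int_{\partial\Omega\cap B_r(x_0)}\langle\nu(x_0),\nu\rangle(\Delta u_k)^2\,d\sigma},
\]
which kills exactly the offending boundary term $\int_{\partial\Omega\cap B_r(x_0)}\langle x-y_k,\nu\rangle(\Delta u_k)^2\,d\sigma$. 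With that term gone, the remaining boundary contributions live on $\Omega\cap\partial B_r(x_0)$ and are controlled by the weighted pointwise gradient estimates (the paper's Claims 6--7), giving $\int_{\Omega\cap B_r(x_0)}e^{\hat u_k}\,dx=O(r^3)$ and the contradiction.

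Two further points to tighten. First, you assert that ``matching to $u_0$ forces'' the normalization constant to diverge, but this is not automatic: the paper must separately handle the dichotomy $\alpha_k\to+\infty$ versus $\alpha_k$ bounded (Claims 10--12). In the bounded case the limit is a genuine $C^4(\overline\Omega)$ solution rather than a sum of Green's functions, the concentration set $S$ is forced onto $\partial\Omega$, and a separate Pohozaev argument at such a boundary point (using the $C^4$ extendability of the limit across $S$) rules this case out; your proposal skips this. Second, your boundedness of $\int_\Omega\langle x-x_0,\nabla u_k\rangle f_k\,dx$ is exactly what the Claim 6--7 pointwise estimates ($\inf_i|x-x_{k,i}|^4 e^{\hat u_k}\le C$, $\inf_i|x-x_{k,i}|^p|\nabla^p\hat u_k|\le C$) are built to deliver; you should make that dependence explicit, since without those estimates the interior Pohozaev term is not obviously controlled. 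The final balancing argument you give for \eqref{id} is essentially the paper's (and Lin--Wei's) and is fine.
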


The main difficulty (and main difference) between (\ref{1.1}) and
(\ref{1.3}) is  that for fourth order equations, Maximum Principle
works for Navier boundary conditions but doesn't work for
Dirichlet boundary conditions. More precisely, Green's function
for the Navier boundary condition
\begin{equation}
\label{green1}
 \Delta^{2} G (x, y) = \delta (x-y) \ \mbox{in} \ \Om, \ G(x,y) =\Delta  G(x,y) =0 \ \mbox{on} \ \partial \Omega
\end{equation}
is positive but the Green's function for Dirichlet boundary
condition may become negative (see \cite{ds} and \cite{gs}).   This
poses a major difficulty in using the method of moving planes (as in \cite{lw}) to
exclude the boundary bubbles.  We overcome this by using the Pohozaev identity and by proving strong pointwise estimates for blowing-up solutions to \eqref{1.1}.

\medskip\noindent As an application  of Theorem \ref{main}, we consider the
following minimization problem
\begin{equation}
\label{min}
J_\rho (u)= \frac{1}{2} \int_\Om |\Delta u|^2\, dx - \rho \log
\int_\Om e^u\, dx,
\end{equation}
where $\Om$ is a bounded and smooth domain of $\R^{4}$ and $u\in
H_0^2 (\Omega)$. Here, $H_0^2(\Omega)$ denotes the completion of
$C^\infty_c(\Omega)$ for the norm $u\mapsto \Vert\Delta u\Vert_2$. Adams's version of the Moser-Trudinger inequality \cite{adams} asserts that there exists $C(\Omega)>0$ such that
\begin{equation}\label{ineq:adams}
\displaystyle{\int_\Omega e^{32\pi^2 u^2}}\, dx\leq C(\Omega)
\end{equation}
for all $u\in H_0^2(\Omega)$ such that $\Vert\Delta u\Vert_2=1$. It follows from \eqref{ineq:adams} that $J_\rho$ is bounded from below if and only if $\rho\leq  64
\pi^2$ (for the proof, see the appendix of \cite{lw}).
Furthermore, if $\rho<64 \pi^2 $, the minimizer of $J_\rho$
actually exists, that is, there exists a $u_\rho \in H_0^2 (\Om)$
such that
\begin{equation}
\label{eq:1-2}
J_\rho (u_\rho) := \inf_{u\in H_0^2 (\Om) } J_\rho(u) :=c_\rho.
\end{equation}
For $J_{64 \pi^2}$, it is an interesting question to ask whether
the minimum $c_{64 \pi^2}$ can be attained  or not. The
Euler-Lagrange equation of $J_\rho$ is just (\ref{1.1}). For the
corresponding problem in two dimension, given $\Sigma$ a smooth
two-dimensional domain, we consider
$$E_\rho(u)=\frac{1}{ 2} \int_\Sigma |\nabla u|^2\, dx - \rho \log \left( \int_\Sigma e^u
dx\right), u \in H_0^1 (\Sigma)$$ where $H_0^1 (\Sigma)$
denotes the completion of $C^\infty_c(\Sigma)$ for the norm
$u\mapsto \Vert\nabla u\Vert_2$. Again, by the Moser-Trudinger
inequality, $E_\rho$ is bounded from below if and only if
$\rho\leq 8\pi$, and moreover, the minimum of $E_\rho$ is always
attained if $\rho<8\pi$. However, it has been noted that
minimizers do not always exist for $E_{8\pi}$. Actually, it
depends on the geometry of $\Sigma$ in a very subtle way. For
example, the minimum of $E_{8\pi}$ is not attained if
$\Sigma$ is a ball in $\R^2$, but, it is attained if $\Sigma$ is a
long and thin domain, see \cite{cc1}. So, it is rather surprising
to have the following claim.

\begin{theorem}\label{main2}
Let $\Om$ be a bounded $C^{4}$ domain in $\R^{4}$, and
$u_\rho$ denote a minimizer
of $J_\rho$ for $\rho<64 \pi^2$. Assume that
\begin{equation}
\label{con}
R_1(Q_0, Q_0) + 16 \pi^2  \Delta_x R(Q_0, Q_0)  >0
\end{equation}
for $Q_0\in\Omega$ such that $R(Q_0, Q_0)= \max_{P \in \Omega} R(P, P)$, where $R_1 (x, P)$ is defined by
\begin{equation}
\label{2.12}
\left\{\begin{array}{l}
\Delta^2 R_1 (x, P) =0 \ \mbox{in} \ \Om, \\
 R_1 (x, P) =\frac{4}{|x-P|^2}, \partial_\nu  R_1 (x, P) = \partial_\nu (\frac{4}{  |x-P|^2}),  \ \mbox{on} \ \partial \Om.
\end{array}
\right.
\end{equation}
Then $u_\rho$ is uniformly bounded in $C^{4}$ as $\rho\uparrow 64
\pi^2 $. Consequently, the minimum of $J_{64 \pi^2 }$ can be
attained. As an example, when $\Omega$ is a ball in $\R^4$, $J_{64
\pi^2}$ is attained.
\end{theorem}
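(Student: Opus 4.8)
The plan is to argue by contradiction. Suppose that along some sequence $\rho_k\uparrow 64\pi^2$ the minimizer $u_{\rho_k}$ is \emph{not} bounded in $C^4(\overline\Omega)$. I would first note that $c_{\rho_k}:=J_{\rho_k}(u_{\rho_k})$ stays bounded: it satisfies $c_{\rho_k}\le J_{\rho_k}(0)=-\rho_k\log|\Omega|$, while $\inf_{0<\rho\le 64\pi^2}c_\rho>-\infty$ by the Adams inequality \eqref{ineq:adams}; since $\tfrac12\Vert\Delta u_{\rho_k}\Vert_2^2=c_{\rho_k}+\rho_k\log\int_\Omega e^{u_{\rho_k}}\ge0$, this forces $\int_\Omega e^{u_{\rho_k}}\ge\delta_0>0$, and then elliptic regularity for \eqref{1.1} shows that an unbounded $C^4$ norm is possible only if $\max_\Omega u_{\rho_k}\to+\infty$. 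Hence $(u_{\rho_k},\rho_k)$ is an unbounded sequence of solutions of \eqref{1.1} with $0<\rho_k\le 64\pi^2$, so Theorem \ref{main} applies; since $\rho_k<64\pi^2$ and $\rho_k\to 64\pi^2m$ with $m\ge1$, necessarily $m=1$. Thus there is a single blow-up point $Q_0\in\Omega$ with $u_{\rho_k}\to 64\pi^2 G(\cdot,Q_0)$ in $C^2_{\mathrm{loc}}(\overline\Omega\setminus\{Q_0\})$ and, by \eqref{id} with $m=1$, $\nabla_xR(Q_0,Q_0)=0$.

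Next I would identify $Q_0$. Let $U_s(y)=\log\dfrac{384\,s^4}{(1+s^2|y|^2)^4}$, which solves $\Delta^2U_s=e^{U_s}$ in $\R^4$ with $\int_{\R^4}e^{U_s}\,dy=64\pi^2$, and let $PU_s^{(P)}\in H_0^2(\Omega)$ denote the $H_0^2$-projection of $U_s(\cdot-P)$, $P\in\Omega$. A standard expansion gives, as $s\to\infty$ and uniformly for $P$ in compact subsets of $\Omega$,
\[
J_\rho\big(PU_s^{(P)}\big)=A(\rho,s)-b\,R(P,P)+o(1),
\]
with $b>0$ and $A(\rho,s)$ independent of $P$. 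On the other hand, since $u_{\rho_k}$ is itself a single bubble of concentration $\lambda_k\to\infty$ centered near $Q_0$, the sharp pointwise estimates obtained in the proof of Theorem \ref{main} give $J_{\rho_k}(u_{\rho_k})=A(\rho_k,\lambda_k)-b\,R(Q_0,Q_0)+o(1)$; comparing with $c_{\rho_k}\le J_{\rho_k}\big(PU_{\lambda_k}^{(P)}\big)$ for arbitrary $P$ yields $R(Q_0,Q_0)\ge R(P,P)+o(1)$, hence $R(Q_0,Q_0)=\max_{P\in\Omega}R(P,P)$; in particular the hypothesis \eqref{con} is available at $Q_0$.

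The core of the argument is then a refined blow-up analysis coupled with a Pohozaev identity. Using the pointwise estimates from the proof of Theorem \ref{main}, $u_{\rho_k}$ equals $PU_{\lambda_k}^{(Q_0)}$ up to an error that is $o(\lambda_k^{-2})$ in energy; the $H_0^2$-projection of the $\lambda_k^{-2}$-order tail $\tfrac{4}{\lambda_k^2|x-Q_0|^2}$ of $U_{\lambda_k}(\cdot-Q_0)$ is governed by the auxiliary biharmonic function $R_1(\cdot,Q_0)$ of \eqref{2.12}, while the second-order Taylor coefficient of the biharmonic function $R(\cdot,Q_0)$ at $Q_0$ — whose gradient vanishes there — contributes $\Delta_xR(Q_0,Q_0)$, so that the combination $R_1(Q_0,Q_0)+16\pi^2\Delta_xR(Q_0,Q_0)$ appears at order $\lambda_k^{-2}$. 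Applying the Pohozaev identity to $u_{\rho_k}$ on a small ball $B_\delta(Q_0)$ and evaluating the boundary-flux terms with this expansion, I expect a relation of the form
\[
\rho_k-64\pi^2=\frac{c_0}{\lambda_k^2}\Big(R_1(Q_0,Q_0)+16\pi^2\,\Delta_xR(Q_0,Q_0)\Big)+o\big(\lambda_k^{-2}\big)
\]
for an explicit constant $c_0>0$. Since $R_1(Q_0,Q_0)+16\pi^2\Delta_xR(Q_0,Q_0)>0$ by \eqref{con}, the right-hand side is strictly positive for $k$ large, forcing $\rho_k>64\pi^2$ — which contradicts $\rho_k<64\pi^2$. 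Therefore $u_\rho$ stays bounded in $C^4(\overline\Omega)$ as $\rho\uparrow 64\pi^2$; extracting a subsequence and passing to the limit in \eqref{1.1} and \eqref{eq:1-2} produces a minimizer of $J_{64\pi^2}$, which gives the first two assertions. (A parallel check is that the energy expansion of $J_{\rho_k}(u_{\rho_k})$ to order $\lambda_k^{-2}$, together with the comparison against $PU_s^{(Q_0)}$, shows that no energy-minimizing blowing-up family can coexist with \eqref{con}, which also pins down the sign $c_0>0$.)

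Finally, for $\Omega$ a ball in $\R^4$ the diagonal Robin function $P\mapsto R(P,P)$ is radial and hence attains its maximum at the center $Q_0=0$; using Boggio's explicit formula for the biharmonic Green's function of the ball one can compute $R_1(0,0)$ and $\Delta_xR(0,0)$ in closed form and verify $R_1(0,0)+16\pi^2\Delta_xR(0,0)>0$, so the previous argument applies and $J_{64\pi^2}$ is attained on the ball. I expect the main obstacle to be the two technical inputs just invoked — the refined pointwise estimates for $u_{\rho_k}$ both near and away from $Q_0$, and the exact evaluation of the Pohozaev boundary integrals — which are genuinely delicate here because, in contrast with the Navier problem \eqref{1.3}, the Dirichlet biharmonic Green's function changes sign and no maximum principle is available; one must rely instead on the quantitative estimates established in the proof of Theorem \ref{main}.
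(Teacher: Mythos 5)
Your strategy differs genuinely from the paper's. The paper closes the argument by an \emph{energy comparison}: it expands $J_{64\pi^2}[{\mathcal P}_\Omega U_{\ep, Q_0}]$ at the test-function level, uses \eqref{con} at order $\ep^2$ to get the \emph{strict} upper bound $c_{64\pi^2} < A_0 -\tfrac12(64\pi^2)^2\max_P R(P,P)$, and pairs this with the leading-order lower bound $c_{64\pi^2} \geq A_0 -\tfrac12(64\pi^2)^2\max_P R(P,P)$ obtained by expanding the energy of a hypothetical blowing-up minimizer (following the computations of \cite{lw}); the contradiction then excludes blow-up without ever computing a rate. You instead propose a Chen--Lin-type argument: identify the blow-up point as a maximizer of $R(\cdot,\cdot)$ via energy comparison, then apply the Pohozaev identity on $B_\delta(Q_0)$ to extract the sign of $\rho_k-64\pi^2$ at order $\lambda_k^{-2}$ from \eqref{con}, contradicting $\rho_k<64\pi^2$. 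Both routes are legitimate in the literature, and yours would actually yield more (an explicit bubbling rate), but it demands strictly more precision.

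That is exactly where the gap lies. Your pivotal relation
\[
\rho_k-64\pi^2=\frac{c_0}{\lambda_k^2}\bigl(R_1(Q_0,Q_0)+16\pi^2\,\Delta_x R(Q_0,Q_0)\bigr)+o\bigl(\lambda_k^{-2}\bigr),\qquad c_0>0,
\]
is asserted (``I expect a relation of the form\dots'') rather than derived, and neither the $o(\lambda_k^{-2})$-accuracy of the approximation $u_{\rho_k}\approx {\mathcal P}_\Omega U_{\lambda_k,Q_0}$ nor the sign of $c_0$ can be read off from the estimates established in the proof of Theorem~\ref{main}. Claims 1--14 of that proof only give leading-order information (convergence to $64\pi^2 G(\cdot,x_i)$, pointwise bounds like $\inf_i |x-x_{k,i}|^p|\nabla^p \hat u_k|\leq C$), which is one full order short of what your Pohozaev evaluation requires. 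The paper is explicit that such sharp bubble estimates are deferred to \cite{lw1}, not proved here; so invoking them would make your argument depend on a reference the present paper deliberately avoids for this result. Your ``parallel check'' via energy comparison is really the paper's actual proof, and it would be cleaner to promote it from a sanity check to the main argument: the strict upper bound via the test-function expansion \eqref{2.13}, against the $o(1)$-level lower bound for a blowing-up minimizer, gives the contradiction without ever needing the $O(\lambda_k^{-2})$ Pohozaev asymptotics.

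Two smaller remarks. First, once blow-up is excluded you still need to show that $u_\rho$ actually converges to a minimizer of $J_{64\pi^2}$; the paper handles this cleanly via Lemma~\ref{lem:cv}, and your sketch should cite it (or reproduce the argument: boundedness from above plus Claim~1 gives $C^3$-bounds, then elliptic regularity). Second, for the ball the paper does not merely argue that $R(\cdot,\cdot)$ is radial: it invokes the radial symmetry result of Berchio--Gazzola--Weth \cite{bgw} and then computes $R(x,0)$, $\Delta_x R(0,0)=\tfrac{1}{2\pi^2}$, and $R_1(x,0)=4(2-|x|^2)$ explicitly from Boggio's formula, which you correctly anticipate.
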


\medskip\noindent It is a natural question to ask whether condition \eqref{con} is satisfied for any domain $\Omega$ of $\mathbb{R}^4$: indeed, this question is closely related to the comparison principle for the bi-harmonic operator with Dirichlet boundary condition. For instance, condition \eqref{con} is satisfied if for any function $u\in C^4(\overline{\Omega})$, we have that
\begin{equation}\label{pm}
\left\{\begin{array}{ll}
\Delta^2 u\geq 0 & \hbox{ in }\Omega\\
u\geq 0\hbox{ and }\partial_\nu u \leq 0 &\hbox{ on }\partial\Omega
\end{array}\right\}\Rightarrow \{\, u>0\hbox{ or }u\equiv 0\}.
\end{equation}
It is remarkable that the comparison principle \eqref{pm} does not hold on any domain: for instance, it is false on some annuli. It also follows from Grunau-Robert \cite{gr} that condition \eqref{con} is satisfied on small perturbations of the ball.

\medskip\noindent Semilinear equations involving exponential nonlinearity and fourth
order elliptic operator appear naturally in conformal geometry and
in particular in prescribing $Q-$curvature on 4-dimensional
Riemannian manifold  $M$ (see e.g. Chang-Yang \cite{cy})
\begin{equation}
\label{Q}
P_g w+ 2 Q_g=2 \tilde{Q}_{g_w} e^{4 w}
\end{equation}
where $P_g$ is the so-called Paneitz operator:
\[ P_g= (\Delta_g)^2 +  \delta \left(\frac{2}{3} R_g I- 2 \mbox{Ric}_g \right)d,\]
 $g_w= e^{2w} g$, $Q_g$ is  $Q-$ curvature under the metric $g$, and $\tilde{Q}_{g_w}$ is the $Q$-curvature under the new metric $g_w$. Integrating (\ref{Q}) over M, we obtain
\[ k_g:= \int_M Q_g\, dv_g=  \int_M (\tilde{Q}_{g_w}) e^{4w}\, dv_g= \int_M \tilde{Q}_{g_w} \, dv_{g_w}\]
and $k_g$ is  conformally invariant (here $dv_g$ denote the Riemannian element of volume). Thus, we can write (\ref{Q}) as
\begin{equation}
\label{Q1}
P_g w+ 2 Q_g=2 k_g \frac{ \tilde{Q}_{g_w} e^{4 w}}{\int_M \tilde{Q}_{g_w} e^{4w}\, dv_g}
\end{equation}
In the special case, where the manifold is the Euclidean space, $P_g= \Delta^2$, and (\ref{Q1}) becomes
\begin{equation}
\label{Q2}
\Delta^2 w= 2 k_g  \frac{ h(x) e^{4 w}}{\int_\Om h(x) e^{4w}\, dx }
\end{equation}
With $ u=2 w,  \rho =4 k_g, h \equiv 1$, we arrive at equation (\ref{1.1}). There is now  an extensive  litterature about this problem. For
instance, we refer to Adimurthi-Robert-Struwe \cite{ars},
Baraket-Dammak-Ouni-Pacard \cite{bdop}, Druet \cite{druet2D},
Druet-Robert \cite{dr}, Hebey-Robert \cite{hr}, Hebey-Robert-Wen
\cite{hrw}, Malchiodi \cite{m}, Malchiodi-Struwe \cite{ms}, Robert \cite{robert}, Robert-Struwe \cite{rs} and the
references therein. Note also that recently, Clapp-Mu\~noz-Musso \cite{clapp} have proved the existence of blowing-up solutions to \eqref{1.3} (that is \eqref{1.1} with Navier boundary condition) with arbitrary number of bubbles provided topological hypothesis on $\Omega$.\par \medskip\noindent Our paper  is organized
as follows. In Section 2, we  present two useful lemmas. Theorem
\ref{main} is proved in Section 3 and Theorem \ref{main2} is
proved in Section 4.\par
\medskip\noindent{\bf Notation:} Throughout this paper, the constant $C$ will denote
various constants which are independent of $\rho$: the value of $C$
might change from one line to the other, and even in the same
line. The equality $B =O(A)$ means that there exists $C>0$ such
that $|B|\leq C A $. All the convergence results are
stated up to the extraction of a subsequence.

\section{ Some preliminaries}
\setcounter{equation}{0} We state two results in this section. The
first one concerns the properties of the Green's function
(\ref{green}). The second one is Pohozaev's identity. Recall that
$G(x,y)$ is defined by (\ref{green}). As we remarked earlier,
in general,  $G(x, y)$ is not positive. We collect properties of
$G$ in the following lemma.

\begin{lemma}
There exists $C>0$ such that for all $x,y\in\Omega$, $x\neq y$, we have that
\begin{equation}
\label{g1}
  |G(x, y)| \leq C \log \left(2+ \frac{1}{ |x-y|}\right)
\end{equation}
\begin{equation}
\label{g2}
 | \nabla^i G(x, y)| \leq C |x-y|^{-i},\,  i \geq 1
\end{equation}
\end{lemma}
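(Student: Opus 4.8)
The plan is to derive both estimates from the structure of the biharmonic Green's function, splitting it as $G(x,y) = \Phi(x-y) + H(x,y)$, where $\Phi$ is the fundamental solution of $\Delta^2$ in $\R^4$, namely $\Phi(z) = -\frac{1}{8\pi^2}\log|z|$ (up to an additive normalization), and $H(x,y)$ is the regular part solving $\Delta^2_x H(x,y) = 0$ in $\Omega$ with boundary data $H(x,y) = -\Phi(x-y)$ and $\partial_\nu H(x,y) = -\partial_\nu\Phi(x-y)$ on $\partial\Omega$. The singular part $\Phi$ manifestly satisfies $|\Phi(x-y)| \leq C\log(2+|x-y|^{-1})$ for $x,y$ in the bounded set $\Omega$, and $|\nabla^i\Phi(x-y)| \leq C|x-y|^{-i}$ for $i\geq 1$; so everything reduces to controlling $H$ and its derivatives.

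First I would establish that the regular part $H$ is bounded on $\Omega\times\Omega$, together with its $x$-derivatives up to any fixed order (at least those needed, i.e. $\nabla_x^i$ for $i\geq 1$). The key point is that the boundary data of $H(\cdot,y)$, namely $-\Phi(\cdot-y)$ restricted to $\partial\Omega$, stays bounded in a suitable norm uniformly in $y\in\Omega$: when $y$ approaches a boundary point, $\Phi(x-y)$ develops a logarithmic singularity, but by standard elliptic estimates for the Dirichlet problem for $\Delta^2$ on the smooth bounded domain $\Omega$ one still obtains $\|H(\cdot,y)\|_{L^\infty(\Omega)} \leq C\big(\|\Phi(\cdot-y)\|_{L^\infty(\partial\Omega)} + \|\partial_\nu\Phi(\cdot-y)\|_{L^\infty(\partial\Omega)}\big)$, and the right-hand side is $O(\log(2+\mathrm{dist}(y,\partial\Omega)^{-1}))$ — which is already consistent with \eqref{g1}. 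For the interior derivative bounds I would use local elliptic regularity: since $\Delta^2_x H = 0$, on any ball $B_r(x_0)\subset\Omega$ we have $|\nabla^i_x H(x_0,y)| \leq C r^{-i}\sup_{B_r(x_0)}|H(\cdot,y)|$; choosing $r$ comparable to $\mathrm{dist}(x_0,\partial\Omega)$ and combining with the boundary regularity up to $\partial\Omega$ gives a uniform bound on $|\nabla^i_x H|$ on compact subsets, and a boundary-adapted Schauder estimate handles the region near $\partial\Omega$. Putting the pieces together yields $|G(x,y)| \leq |\Phi(x-y)| + |H(x,y)| \leq C\log(2+|x-y|^{-1})$ and $|\nabla^i_x G(x,y)| \leq |\nabla^i_x\Phi(x-y)| + |\nabla^i_x H(x,y)| \leq C|x-y|^{-i}$ for $i\geq 1$.

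The main obstacle I anticipate is obtaining the derivative bound \eqref{g2} \emph{uniformly up to the boundary} and uniformly in the pole $y$ — in particular showing that $|\nabla^i_x H(x,y)|$ does not blow up faster than $|x-y|^{-i}$ when both $x$ and $y$ approach the \emph{same} boundary point. Here the scaling has to be done carefully: one rescales around the common boundary point at scale $\delta = \mathrm{dist}(y,\partial\Omega)$, flattens the boundary, and applies elliptic estimates on the rescaled (nearly half-space) configuration where $|x-y|$ is comparable to $\delta$; the smoothness of $\partial\Omega$ is what makes the flattening and the constants uniform. An alternative, more robust route is to differentiate the explicit representation $H(x,y) = \int_{\partial\Omega} \big[\partial_\nu^2 \Gamma(x,z)\, \Phi(z-y) + \cdots\big]\, d\sigma(z)$ in terms of the Poisson-type kernels for $\Delta^2$ and estimate the resulting boundary integrals directly; this reduces \eqref{g1}–\eqref{g2} to pointwise kernel bounds that follow from the smoothness of $\partial\Omega$. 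Either way, once the regular part is controlled, the statement follows by the triangle inequality as above.
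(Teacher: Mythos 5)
The paper does not actually prove this lemma: the authors simply cite Krasovski\u{\i} and Dall'Acqua--Sweers, where the estimates are obtained by a careful analysis of the explicit kernel representations for $\Delta^2$ on smooth domains. Your decomposition $G=\Phi+H$ is the natural starting point, but the estimate you place on the regular part $H$ is both unjustified as stated and, even if it held, insufficient to conclude \eqref{g1}.

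First, the inequality $\|H(\cdot,y)\|_{L^\infty(\Omega)}\leq C\bigl(\|\Phi(\cdot-y)\|_{L^\infty(\partial\Omega)}+\|\partial_\nu\Phi(\cdot-y)\|_{L^\infty(\partial\Omega)}\bigr)$ is not a ``standard elliptic estimate.'' The biharmonic Dirichlet problem has no maximum principle, and the Poisson-type kernel that carries the normal-derivative datum is one order more singular than the one for the trace datum, so an $L^\infty(\partial\Omega)\to L^\infty(\Omega)$ bound of this form must be established, not assumed. Moreover, $\|\partial_\nu\Phi(\cdot-y)\|_{L^\infty(\partial\Omega)}$ is of size $\mathrm{dist}(y,\partial\Omega)^{-1}$, so even under your hypothesis the right-hand side would be $O(\mathrm{dist}(y,\partial\Omega)^{-1})$, not the logarithm you quote. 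Second, and more fundamentally, a bound of the form $\|H(\cdot,y)\|_{L^\infty(\Omega)}\lesssim\log\bigl(2+\mathrm{dist}(y,\partial\Omega)^{-1}\bigr)$ is in fact sharp (the Robin function diverges like this) yet does not give \eqref{g1}: fix $|x-y|\geq 1/2$ and send $\mathrm{dist}(y,\partial\Omega)\to 0$; then \eqref{g1} requires $|G(x,y)|\leq C\log 4$, a bound uniform in $y$, whereas your estimate on $H$ permits $|G(x,y)|$ to diverge logarithmically. What is really needed is the \emph{pointwise} bound $|H(x,y)|\leq C\log(2+|x-y|^{-1})$ (and its gradient analogues), which encodes the second-order vanishing of $G$ on $\partial\Omega$ in each variable; an $L^\infty$-in-$x$ bound depending only on $\mathrm{dist}(y,\partial\Omega)$ cannot see this. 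Your alternative route --- differentiating the explicit boundary-integral representation of $H$ and estimating the biharmonic Poisson kernels pointwise --- is indeed how Krasovski\u{\i} and Dall'Acqua--Sweers proceed, and it is the correct one, but your first argument, as written, does not substitute for it.
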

\begin{proof} These estimates are originally due to
Krasovski\u{\i} \cite{kraso}. We also refer to Dall'Acqua-Sweers
\cite{ds} and Grunau-Robert \cite{gr}. \end{proof}

\medskip\noindent Next we  state a Pohozaev identity for equation (\ref{1.1}).
\begin{lemma}
 Let $ u\in C^4(\overline{\Omega})$ be a solution of $ \Delta^{2} u = f(u) $ in $ \Omega $. Then we have for any $ y \in \R^{4} $,
\begin{eqnarray*}
    && 4 \int_{\Omega } F(u)\, dx \nonumber
  =   \int_{\partial \Omega } \langle x -y, \nu \rangle F(u)\, d\sigma + \frac{1}{2} \int_{\partial \Omega } v^{2} \langle x - y, \nu \rangle d\sigma + 2 \int_{\partial \Omega } \frac{ \partial u}{ \partial \nu } v \, d\sigma  \nonumber \\
 &  &+    \int_{\partial \Omega } \Biggl( \frac{ \partial v}{\partial \nu } \langle x-y, D u \rangle  + \frac{ \partial u}{\partial \nu } \langle x -y, D v \rangle  - \langle Dv, D u \rangle  \langle x-y, \nu \rangle \Biggr)\, d\sigma \
\end{eqnarray*}
where $ F(u) = \int_{0}^{u} f(s)\, ds , -\Delta u = v $ and $ \nu (x)  $ is the normal outward derivative of x on $\partial \Omega$.

\end{lemma}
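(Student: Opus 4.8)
The plan is to derive the Pohozaev identity by the standard multiplication-and-integration-by-parts argument, adapted to the biharmonic operator. First I would multiply the equation $\Delta^2 u = f(u)$ by the Pohozaev multiplier $\langle x - y, Du\rangle$ and integrate over $\Omega$. On the right-hand side, since $f(u)\langle x-y, Du\rangle = \langle x-y, D(F(u))\rangle$, an integration by parts gives
\[
\int_\Omega f(u)\,\langle x-y, Du\rangle\, dx = -4\int_\Omega F(u)\, dx + \int_{\partial\Omega}\langle x-y,\nu\rangle F(u)\, d\sigma,
\]
using that $\mathrm{div}(x-y) = 4$ in $\R^4$ and that $u$ (hence $F(u)$, assuming $F(0)=0$) vanishes on $\partial\Omega$ — actually the boundary term survives in the stated identity, so one keeps it. This produces the term $4\int_\Omega F(u)$ and the first boundary term.

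The heart of the computation is the left-hand side $\int_\Omega \Delta^2 u\,\langle x-y, Du\rangle\, dx$. Writing $v = -\Delta u$ so that $\Delta^2 u = \Delta^2 u = \Delta(-v)\cdot(-1)$... more precisely $\Delta^2 u = -\Delta v$, I would integrate by parts twice to move the two Laplacians around. The cleanest route is to use the known Rellich-Pohozaev identity for the Laplacian applied to $v$: for any $w$, $\int_\Omega \Delta w\,\langle x-y, Dw\rangle\, dx$ has a standard boundary reduction. Here the two factors are different ($v$ and $u$), so I would instead integrate by parts directly: transfer one $\Delta$ from $\Delta^2 u = -\Delta v$ onto $\langle x-y, Du\rangle$, noting $\Delta\langle x-y, Du\rangle = 2\Delta u + \langle x-y, D(\Delta u)\rangle = -2v - \langle x-y, Dv\rangle$. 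This yields interior terms involving $\int_\Omega v^2$, $\int_\Omega v\langle x-y, Dv\rangle$, and $\int_\Omega \langle Dv, Du\rangle$-type expressions (after a further integration by parts on $\int_\Omega Dv\cdot D(\langle x-y,Du\rangle)$), plus a collection of boundary integrals over $\partial\Omega$ carrying the factors $\partial_\nu v$, $\partial_\nu u$, $\langle x-y,\nu\rangle$, $Dv$, $Du$. The interior terms $\int_\Omega v\langle x-y,Dv\rangle$ can themselves be integrated by parts to $-2\int_\Omega v^2 + \frac12\int_{\partial\Omega} v^2\langle x-y,\nu\rangle$, and one checks that all the bulk $\int_\Omega v^2$ contributions cancel, leaving only boundary terms on the left-hand side. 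Matching these against the right-hand side computation and rearranging gives exactly the stated formula, with the $\frac12\int_{\partial\Omega} v^2\langle x-y,\nu\rangle$, the $2\int_{\partial\Omega}\partial_\nu u\, v$, and the grouped term $\int_{\partial\Omega}(\partial_\nu v\,\langle x-y,Du\rangle + \partial_\nu u\,\langle x-y,Dv\rangle - \langle Dv,Du\rangle\langle x-y,\nu\rangle)$.

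The main obstacle is purely bookkeeping: tracking every boundary term through the (at least three) integrations by parts without sign errors, and verifying that all interior terms other than $\int_\Omega F(u)$ cancel. In particular the identity $\Delta\langle x-y, Du\rangle = 2\Delta u + \langle x-y, D\Delta u\rangle$ (valid because each Cartesian second derivative of $x_i$ vanishes and the mixed terms produce the factor $2$) must be used carefully, and one must keep in mind that the \emph{general} identity has $y$ arbitrary, so no simplification from $x-y$ vanishing is available; every boundary term must be retained. Since the statement is for general $f$ and general $y$, there is no shortcut through the specific structure of \eqref{1.1}; it is the classical biharmonic Pohozaev identity, and I would either cite it (it appears e.g. in work of Mitidieri, van der Vorst, and Pucci–Serrin) or present the integration-by-parts computation in full, being meticulous about the decomposition of $Du$ into its normal and tangential parts on $\partial\Omega$ when simplifying the last group of boundary terms.
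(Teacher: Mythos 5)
Your plan is correct and will lead to the stated identity, but it takes the long way around compared with the paper. You propose the classical route: multiply $\Delta^2 u = f(u)$ by the Rellich multiplier $\langle x-y, Du\rangle$, then integrate by parts repeatedly, tracking every boundary term and checking that the bulk $\int_\Omega v^2$ contributions cancel (they do, as you suspect). The paper instead verifies three pointwise divergence identities directly,
\[
\mathrm{div}\Bigl((x-y,\nabla v)\nabla u + (x-y,\nabla u)\nabla v - (\nabla u,\nabla v)(x-y)\Bigr)
  = (x-y,\nabla v)\Delta u + (x-y,\nabla u)\Delta v - 2(\nabla u,\nabla v),
\]
together with $\mathrm{div}\bigl((x-y)F(u)\bigr) = f(u)(x-y,\nabla u) + 4F(u)$ and $\mathrm{div}\bigl(\tfrac12 v^2(x-y) + 2v\nabla u\bigr) = v(x-y,\nabla v) + 2(\nabla u,\nabla v)$ (using $\Delta u=-v$). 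Substituting $\Delta u=-v$, $\Delta v=-f(u)$ into the first and combining with the other two shows that $4F(u)$ is itself the divergence of an explicit vector field; a \emph{single} application of the divergence theorem then yields the lemma. The two arguments are of course equivalent — the paper's vector fields are exactly the antiderivatives your step-by-step integrations by parts would produce — but the paper's packaging replaces the multi-stage boundary-term bookkeeping you correctly flag as the main obstacle with one local algebraic check plus one global integration. If you want to write the proof out, it is easier to imitate the paper: verify the three divergence identities pointwise (using $\partial_i(x-y)_j=\delta_{ij}$ and $\mathrm{div}(x-y)=4$), add them, and integrate once. Also note the paper implicitly uses $F(0)=0$ when interpreting $F(u)=\int_0^u f$, which is consistent with your remark.
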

\begin{proof} More general version of this formula can be seen, for example in \cite{m1}.
In our case, integrating the identity on $\Omega$
\begin{eqnarray*}
&&\mbox{div} ( (x-y, \nabla v) \nabla u+ (x-y, \nabla u) \nabla v- (\nabla u, \nabla v) (x-y))\\
&&= (x-y, \nabla v) \Delta u + (x-y, \nabla u) \Delta v - 2 (\nabla u, \nabla v)
\end{eqnarray*}
for $ u, v \in C^2 (\bar{\Omega})$, $\nabla =\nabla_x$, and noting that
\[ \mbox{div} ( (x-y) F(u))= f(u) (x-y, \nabla u) + 4 F(u)\]
and
\[ \mbox{div} \left(\frac{1}{2} v^2 (x-y) +2 v \nabla u\right)= v ( \nabla v, x-y) +2 (\nabla u, \nabla v)\]
if $v =-\Delta u$, we get the desired formula.\end{proof}

\section{Proof of Theorem \ref{main}}
\setcounter{equation}{0} Let $ u_k$ be a family of solutions to
problem (\ref{1.1}) such that there exists $\Lambda>0$ such that
\begin{equation}
0<\rho_k \leq \Lambda.
\end{equation}
In this section, we study the asymptotic behavior of unbounded
solutions and prove Theorem \ref{main}. Let
$$ \alpha_k:= \log \left(\frac{\int_\Om e^{ u_k}\, dx}{ \rho_k}\right)\hbox{ and }\hat{u}_k:=  u_k -\alpha_k.$$
Theorem \ref{main} is proved by a series of claims. We first claim
that\par
\medskip\noindent{\bf Claim 1:}  There exists $C\in\mathbb{R}$ such that $\alpha_k \geq C$ for all $k\in\mathbb{N}$.

\begin{proof} Note that $\hat{u}_k$ satisfies
\begin{equation}
\label{2}
\Delta^2 \hat{u}_k= e^{ \hat{u}_k} \ \ \mbox{in} \ \Om, \ \htu_k= -\alpha_k, \ \partial_\nu\htu_k=0 \ \mbox{on}\  \partial  \Omega
\end{equation}
with $ \int_\Om e^{\htu_k }\, dx <C$ for all $k$. So
\begin{equation}
\label{3}
\htu_k (x)= \int_\Om G(x, y) e^{ \htu_k (y)} dy  -\alpha_k
\end{equation}
and hence by  (\ref{g2}),
\begin{eqnarray*}
\int_\Om |\Delta \htu_k (x)|\, dx &\leq &\int_\Om \left(\int_\Om |\Delta_x G(x, y)|  e^{ \htu_k (y)} \,dy \right)\,dx\\
& \leq & C \int_\Om \left(\int_\Om \frac{1}{|x-y|^2}   e^{ \htu_k (y)} \,dy \right)\,dx \leq C.
\end{eqnarray*}
Similarly, integrating \eqref{3}, we get that there exists $C>0$
such that
\begin{equation}\label{bnd:L1}\Vert
\htu_k+\alpha_k\Vert_{L^1(\Omega)}\leq C \end{equation} for all
$k\in\mathbb{N}$. It follows from Theorem 1.2 of \cite{robert}
that there exists $S_1 \subset \Om$, where $S_1$ is at most
finite, such that $ \htu_k \leq C(\omega)$ uniformly in $\omega$
for $\omega \subset \subset \Omega \backslash S_1$. Therefore,
with \eqref{bnd:L1}, we get that $(\alpha_k)$ cannot go to
$-\infty$ when $k\to +\infty$. This proves Claim 1.\end{proof}

\medskip\noindent A consequence is the following proposition that concerns the case when $u_k$ is bounded from above:
\begin{lemma}\label{lem:cv} Let $(u_k, \rho_k)$ be a sequence of solutions to (\ref{1.1}) such that there exists $\Lambda>0$ such that $0<\rho_k\leq\Lambda$. Assume that there exists $C>0$ such that $u_k\leq C$ for all $k\in\mathbb{N}$. Then there exists $u\in C^4(\overline{\Omega})$ such that, up to a subsequence $\lim_{k\to +\infty} u_k=u$.
\end{lemma}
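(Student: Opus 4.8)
The plan is to observe that, under the hypothesis $u_k\leq C$, Claim~1 forces the right-hand side of \eqref{2} to be uniformly bounded in $L^\infty(\Omega)$, after which the conclusion follows from the $L^p$ and Schauder estimates for the clamped biharmonic operator. The one point to watch is that $\hat u_k$ itself need not be bounded in $L^\infty$ (its boundary values $-\alpha_k$ may tend to $+\infty$): one should work with $u_k$, which satisfies the \emph{homogeneous} conditions $u_k=\partial_\nu u_k=0$ on $\partial\Omega$, and use only the one-sided bound on $\hat u_k$ to control the nonlinearity.

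\medskip\noindent Concretely, by Claim~1 there is $C_0\in\mathbb{R}$ with $\alpha_k\geq C_0$ for all $k$, hence $\hat u_k=u_k-\alpha_k\leq C-C_0$ on $\Omega$, so by \eqref{2} we get $0<\Delta^2 u_k=e^{\hat u_k}\leq e^{C-C_0}$ in $\Omega$. From \eqref{3} one has $u_k(x)=\int_\Omega G(x,y)e^{\hat u_k(y)}\,dy$, so \eqref{g1} yields $\|u_k\|_{L^\infty(\Omega)}\leq C$. Fixing $p\in(1,+\infty)$, since $\|e^{\hat u_k}\|_{L^\infty}\le e^{C-C_0}$ and $\|u_k\|_{L^\infty}\le C$, the $L^p$ estimates for the problem $\Delta^2 u_k=e^{\hat u_k}$ with homogeneous Dirichlet data give $\|u_k\|_{W^{4,p}(\Omega)}\leq C(p)$ uniformly in $k$; by the Sobolev embedding $W^{4,p}(\Omega)\hookrightarrow C^{3,\gamma}(\overline{\Omega})$ (valid in $\R^4$ for $p>4$), $(u_k)$ is bounded in $C^{3,\gamma}(\overline{\Omega})$ for every $\gamma\in(0,1)$, and in particular $\|\nabla u_k\|_{L^\infty(\Omega)}\leq C$. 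Since $\nabla\bigl(e^{\hat u_k}\bigr)=e^{\hat u_k}\nabla u_k$, the right-hand side of \eqref{2} is then bounded in $C^{0,1}(\overline{\Omega})$, and Schauder estimates upgrade the bound to $\|u_k\|_{C^{4,\gamma}(\overline{\Omega})}\leq C$ for every $\gamma\in(0,1)$.

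\medskip\noindent Finally, the compact embedding $C^{4,\gamma}(\overline{\Omega})\hookrightarrow C^4(\overline{\Omega})$ together with the Arzel\`a--Ascoli theorem produces a subsequence and a function $u\in C^4(\overline{\Omega})$ with $u_k\to u$ in $C^4(\overline{\Omega})$, which is the assertion. (If one also wishes to identify the limit, pass to a further subsequence with $\rho_k\to\rho_\infty\in[0,\Lambda]$: dominated convergence gives $\int_\Omega e^{u_k}\to\int_\Omega e^u$, so for $\rho_\infty>0$ one gets $\alpha_k\to\log\bigl(\rho_\infty^{-1}\int_\Omega e^u\,dx\bigr)$ and $u$ solves \eqref{1.1} with $\rho=\rho_\infty$, while for $\rho_\infty=0$ one has $e^{\hat u_k}\to 0$ uniformly and $u\equiv 0$.) I do not expect a genuine obstacle here: every step after the $L^\infty$ bound on $e^{\hat u_k}$ is a routine elliptic bootstrap, and the only real content is the observation in the first paragraph, which is where the hypothesis $u_k\le C$ and Claim~1 enter.
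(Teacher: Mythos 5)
Your proof is correct and follows the same route as the paper's: use Claim~1 together with the hypothesis $u_k\le C$ to get a uniform $L^\infty$ bound on $e^{\hat u_k}=\Delta^2 u_k$, then bootstrap via elliptic theory on $u_k$ (which has homogeneous clamped boundary data) to a $C^4(\overline{\Omega})$ bound and extract a convergent subsequence. The paper states this in one line ("$(u_k)$ is bounded in $C^3(\overline{\Omega})$; the conclusion follows from elliptic theory"), while you spell out the $L^p$--Sobolev--Schauder chain and make explicit the useful point that one should bootstrap on $u_k$ rather than on $\hat u_k$, whose boundary value $-\alpha_k$ is not yet known to be bounded.
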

\begin{proof} It follows from the assumption of the lemma and Claim 1 that $\hat{u}_k\leq C_1$ on $\Omega$. It then follows from \eqref{1.1} and \eqref{2} that $(u_k)$ is bounded in $C^3(\overline{\Omega})$. The conclusion follows from elliptic theory.
\end{proof}
\noindent In the sequel, we assume that
\begin{equation}\label{lim:infty}
\max_{x \in \Om} u_k (x) \to +\infty.
\end{equation}
\medskip\noindent Our second claim is an upper bound on the $L^p-$norm of $
\nabla^i \htu_k$:\par
\medskip\noindent {\bf Claim 2:} For all $i=1, 2, 3, p \in (1,
\frac{4}{i})$, there exists $C=C(i, p)$ such that $ \| \nabla^i
\htu_k \|_{L^p (\Omega)} \leq C$.\par
\begin{proof} By Green's
representation formula (\ref{3}) and (\ref{g2}), we have
\[ |\nabla^i \htu_k (x)| \leq  \int_\Om | \nabla^i_x G(x, y)| e^{ \htu_k (y)} \,dy  \]
\[ \leq C \int_\Om \frac{1}{|x-y|^i} e^{ \htu_k} \,dy.\]
Thus for any $\varphi \in C_c^\infty (\R^4)$, we have
\begin{eqnarray*}
&&\int_\Om |\nabla^i \htu_k (x)| \varphi \,dx  \leq  \int_\Om \left(\int_\Om | \nabla^i_x G(x, y)| e^{ \htu_k (y)} \,dy \right) |\varphi (x)|\, dx\\
&&\leq C \int_\Om e^{ \htu_k}\left(\int_\Om |x-y|^{-i} |\varphi (x)|\, dx\right)\, dy\\
&&\leq C  \int_\Om e^{ \htu_k}\| |x-y|^{-i} \|_{L^p (\Om)} \|\varphi\|_{L^q (\Om)}\, dy\\
&& \leq C \| \varphi\|_{L^q (\Om)}
\end{eqnarray*}
where $ \frac{1}{p}+\frac{1}{q}=1$. Here, we used that $\Omega$ is
bounded. By duality, we derive that  $ \| \nabla^i \htu_k \|_{L^p
(\Omega)} \leq C$.\end{proof}

\medskip\noindent The third claim asserts that bubbles must have some distance from the
boundary:\par
\medskip\noindent{\bf Claim 3:}  Let $(x_k)_{k \in {\mathbb N}} \in \Om$
be such that $ u_k (x_k)=\max_\Om u_k$. Let $ \mu_k:= e^{-
\frac{1}{4} \htu_k (x_k)}$. Then $ \lim_{k \to +\infty} \frac{d
(x_k, \partial \Om)}{\mu_k} =+\infty$.
\begin{proof} Suppose otherwise, $ d(x_k, \partial \Om)= O(\mu_k)$. Let $\Om_k:=\frac{\Om -x_k}{\mu_k}$.
Then up to a rotation, we may assume that $\Om_k \to (-\infty,
t_0) \times \R^3$. Let $\tilde{u}_k (x):=\htu_k (x_k +\mu_k x) +
4\log \mu_k$. Note that $\lim_{k\to +\infty}\mu_k=0$ (otherwise $\hat{u}_k$ is bounded from above, and, as in the proof of Lemma \ref{lem:cv}, we get that $(u_k)$ is bounded: a contradiction with \eqref{lim:infty}). Let $R>0$
and $x\in B_R(0)\cap\Om_k$, then we have by the representation
formula (\ref{3}) and (\ref{g2})
\begin{eqnarray*}
|\nabla^i \tilde{u}_k  (x)| &=& | \mu_k^i \nabla^i \htu_k (x_k+\mu_k x)|\\
& =& \mu_k^i \left|\int_\Om \nabla_x^i G (x_k +\mu_k x, y) e^{ \htu_k (y)}\, dy\right|\\
&\leq &C \mu_k^i \left( \int_{ B_{2R \mu_k} (x_k)} \frac{1}{ |x_k + \mu_k x-y|^i} e^{ \htu_k (y)} \,dy \right.\\
&& \left.+ \int_{ \Om_k \backslash B_{2R \mu_k} (x_k)}   \frac{1}{ |x_k + \mu_k x-y|^i} e^{ \htu_k (y)}\, dy
 \right).
\end{eqnarray*}
On $\Om_k \backslash B_{2R \mu_k} (x_k), |x_k +\mu_k x-y| \geq
|y-x_k| -\mu_k |x| \geq R \mu_k, e^{ \htu_k (y)} \leq e^{ \htu_k
(x_k)} = \mu_k^{-4}$. Hence
\[ |\nabla^i \tilde{u}_k (x)|\leq \mu_k^{i-4} \int_{ B_{2R\mu_k} (x_k)} \frac{dy}{|x_k +\mu_k x-y|^i} + C \int_\Om e^{ \htu_k} \,dy \leq C (R).\]
In particular, this implies that $ |\tilde{u}_k (x) -\tilde{u}_k
(0)| \leq C |x|$ for all $x \in B_R (0)$. Now let $ x \in \partial
\Om_k$, we get $ |\htu_k (x_k) +\alpha_k | \leq C $. This gives
\[ 4\log \frac{1}{\mu_k} +\alpha_k = O(1).\]
A contradiction with $\lim_{k\to +\infty}\mu_k=0$ and Claim 1.
Thus $\frac{d(x_k, \partial \Om)}{\mu_k} \to +\infty$.\end{proof}
\medskip\noindent Claim 4 concerns the first bubble:\par
\medskip\noindent{\bf Claim 4:} We have that
$$\lim_{k \to +\infty} \htu_k (x_k +\mu_k
x) + 4 \log \mu_k= -4 \log \left(1+\frac{|x|^2}{ 8 \sqrt{6}}\right)$$ in
$C_{loc}^4 (\R^4)$.
\begin{proof} By Claim 3, we have $\Om_k \to \R^4$. Since $ \tilde{u}_k (x)= \htu_k (x_k +\mu_k x)+4 \log \mu_k$, $ \tilde{u}_k (x) \leq \tilde{u}_k (0)$ and $ \Delta^2 \tilde{u}_k = e^{ \tilde{u}_k}$ in $\Om_k$. Note by Claim 3, $ |\nabla^i \tilde{u}_k (x) | \leq C(R)$, for all $ x \in B_R (0)$. By standard regularity arguments,  $ \tilde{u}_k \to \tilde{u}$ in $C_{loc}^4 (\R^4)$ where $\tilde{u}$ satisfies
\begin{equation}
\label{5}
\Delta^2 \tilde{u}= e^{ \tilde{u}}, \ \tilde{u} (0)=0, \int_{\R^4} e^{ \tilde{u}}\, dx <+\infty.
\end{equation}
Note that solutions to (\ref{5}) are nonunique. To characterize $\tilde{u}$, we compute
\[ \Delta \tilde{u}_k (x)= \int_\Om \mu_k^2 \Delta_x G(x_k+\mu_k x, y) e^{ \htu_k (y)}\, dy\]
and for $x \in B_R (0)$,
\[ \int_{B_R(0)} |\Delta \tilde{u}_k|\, dx\leq C\int_{\Om} e^{ \htu_k (y)} \left(\mu_k^2 \int_{B_R(0)} \frac{dx}{ |x_k +\mu_k x-y|^2}\right)\, dy\]
\[ \leq C R^2 \int_{\Om} e^{ \htu_k (y)} \,dy \leq C R^2.\]
That is, for any $R>0$, we have $ \int_{B_R(0)} |\Delta
\tilde{u}|\, dx \leq C R^2$. It then follows from results of
\cite{lin} and \cite{wx} that $\tilde{u} (x)= -4 \log
\left(1+\frac{|x|^2}{ 8 \sqrt{6}}\right) $.  Moreover, $\int_{B_{R \mu_k}
(x_k)} e^{ \htu_k}\, dx= \int_{B_R (0)} e^{ \tilde{u}_k}\, dx$ and
hence
\begin{equation}\label{eq:1}
\lim_{R \to +\infty} \lim_{k \to +\infty} \int_{B_{R\mu_k } (x_k)} e^{ \htu_k}\, dx = 64 \pi^2.
\end{equation}
\end{proof}

\medskip\noindent We say that the property ${\mathcal H}_p$ holds if there exists $
(x_{k, 1}, ..., x_{k, p}) \in \Om^p$ such that, denoting $\mu_{k,
i}:= e^{- \frac{1}{4} \htu_k (x_{k, i})}$, we have that
\begin{itemize}
\item[(i)] $  \ \ \ \lim_{k \to +\infty} \frac{ |x_{k, i}-x_{k, j}|}{\mu_{k, i}}= +\infty, \ \forall i \not = j,$

\item[(ii)] $ \ \ \ \lim_{k \to +\infty} \frac{d (x_{k,i}, \partial \Om)}{\mu_{k, i}}= +\infty,$

\item[(iii)] $ \ \ \ \lim_{ k \to +\infty} (\htu_k (x_{k, i} + \mu_{k, i} x )+ 4 \log \mu_{k, i} ) = -4 \log (1+\frac{|x|^2}{ 8 \sqrt{6}}) \ \mbox{in} \ \ C_{loc}^4 (\R^4)$.

\end{itemize}
\noindent By Claim 4, ${\mathcal H}_1$ holds.\par
\medskip\noindent{\bf Claim 5:} Assume that ${\mathcal H}_p$ holds. Then
either ${\mathcal H}_{p+1}$ holds, or there exists $C>0$ such that
\begin{equation}
\label{8}
 \inf_{i=1,..., p} \{ |x- x_{k, i}|^4 \} e^{\htu_k (x)} \leq C, \forall x \in \Om.
\end{equation}
\begin{proof} Let $ w_k (x):=\inf_{i=1,..., p} |x-x_{k, i}|^4 e^{\htu_k (x)}$. Assume that $\| w_k \|_{L^\infty (\Om)} \to +\infty$ when $k \to +\infty$.
Let $y_k\in \Om$ be such that $w_k (y_k)=\max_{\Om} w_k$ and
$\gamma_k:= e^{- \frac{1}{4} \htu_k (y_k)}$ and $ v_k (x):= \htu_k
(y_k + \gamma_k x) + 4 \log \gamma_k$. Then $v_k$ satisfies
$\Delta^2 v_k= e^{ v_k}$. Note that $ w_k (y_k)= \inf_{i=1,..., p}
\frac{|y_k-x_{k, i}|^4}{\gamma_k^4} \to +\infty$. Then $ \lim_{k
\to +\infty} \frac{ |y_k -x_{k, i}|}{\gamma_k} \to +\infty$ for
all $i=1,..., p$. Assume that there exists $i$ such that $ y_k
-x_{k, i}= O(\mu_{k, i})$, Then $ y_k= x_{k, i} +\mu_{k, i}
\theta_{k, i}$ and
\[|y_k -x_{k, i}|^4  e^{\htu_k (y_k)} = |\theta_{k, i}|^4  e^{\htu_k ( x_{k, i} +\mu_{k, i} \theta_{k, i}) +4 \log \mu_{k, i}} \to |\theta_{\infty, i}|^4   \frac{ 1}{ (1 + \frac{|\theta_{\infty, i}|^2}{8 \sqrt{6}})^4 }\]
where $\theta_{\infty, i}= \lim_{k \to +\infty} \theta_{k, i}$.
This implies that $w_k (y_k)= O(1)$. A contradiction. Thus $
\frac{|y_k- x_{k, i}|}{\mu_{k, i}} \to +\infty$ for all $
i=1,...,p$.\par
\smallskip\noindent Let $x \in B_R (0)$ and let $\epsilon\in (0,1)$. Then $ w_k (y_k +\gamma_k x) \leq w_k (y_k)$. That is,
$\inf_{i=1,..., p} | y_k -x_{k, i} +\gamma_k x|^4 e^{\htu_k (y_k
+\gamma_k x)}\leq \inf_{i=1,..., p} |y_k -x_{k, i}|^4 e^{\htu_k
(y_k)}$ and so
\[e^{v_k (x)} \leq \frac{\inf_{i=1,..., p} |y_k -x_{k, i}|^4}{ \inf_{i=1,..., p}|y_k-x_{k, i} + \gamma_k x|^4}. \]
\noindent Let $k \geq k(R)$ be such that $ \frac{ |y_k -x_{k,
i}|}{\gamma_k} \geq \frac{R}{\epsilon}$ for all $ i=1,..., p, k
\geq k(R)$. Then for $i=1,..., p$, we have $ |y_k -x_{k, i}
+\gamma_k x| \geq |y_k -x_{k, i}| (1-\epsilon)$ and $
\inf_{i=1,..., p} |y_k -x_{k, i} +\gamma_k x|^4 \geq
\inf_{i=1,..., p} |y_k -x_{k, i}|^4 (1-\epsilon)^4$. This yields
\[ e^{v_k (x)} \leq \frac{1}{(1-\epsilon)^4}, \ x \in B_R (0), k \geq k(R).\]
Similar to Claim 3, we also have that
$$ \lim_{k\to +\infty}\frac{ d(y_k, \partial \Om)}{
\gamma_k }=+\infty\hbox{ and }\lim_{k \to +\infty} v_k (x) = -4
\log \left(1+\frac{|x|^2}{8 \sqrt{6}}\right)$$
in $C_{loc}^4 (\R^4)$. Letting
$ x_{k, p+1}= y_k$, then ${\mathcal H}_{p+1}$ holds.  The claim is
thus proved. \end{proof}
\medskip\noindent{\bf Claim 6:} There exists $N$ such that ${\mathcal H}_N$ holds and there exists $C>0$ such that
\begin{equation}
\label{9}
\inf_{i=1,..., p} |x- x_{k, i}|^4 e^{\htu_k (x)} \leq C, \ \forall x \in \Omega.
\end{equation}
\begin{proof} Otherwise, since ${\mathcal H}_1$ holds, then ${\mathcal H}_p$ holds for all $ p \geq 1$. Given $R>0$, we have $B_{R \mu_{k, i} } (x_{k, i}) \cap B_{R \mu_{k, j}} (x_{k, j}) = \emptyset$ for all $ i \not = j, k \geq k(R)$. Then
\[ \rho_k = \int_\Om e^{ \htu_k}\, dx \geq \int_{\cup_{i=1,..., p} B_{R \mu_{k, i}} (x_i)} = \sum_{i=1}^p \int_{B_{R\mu_{k, i}} (x_{k, i})} e^{  \htu_k (y)}\, dy  \geq 64 \pi^2p  + o(1)_{R}\]
where $\lim_{R\to +\infty}\lim_{k\to +\infty}o(1)_R = 0$. Since $\rho_k \leq \Lambda$, we
derive that $p \leq \Lambda/64\pi^2$ for all $p$: a contradiction. Hence Claim 6 holds.\end{proof}
\medskip\noindent{\bf Claim 7:} For $ p=1,2,3$, there exists $C >0$ such that
\begin{equation}
\label{10}
\inf_{i=1,..., p} |x- x_{k, i}|^p |\nabla^p \htu_k (x)|   \leq C, \ \forall x \in \Omega.
\end{equation}
\begin{proof} By Green's representation formula, we have
$$\nabla^p \htu_k (x)= \int_\Om \nabla_x^p G(x, y) e^{ \htu_k (y)}\, dy.$$
Hence
\begin{equation}
|\nabla^p \htu_k (x)| \leq C \int_\Om |x-y|^{-p} e^{ \htu_k (y)}\, dy.
\end{equation}
Let $ R_k (x):= \inf_{i=1,..., N} |x-x_{k, i}|, \Om_{k, i}= \{ x \in \Om: |x-x_{k, i}|= R_k (x)\}$. Then
\begin{eqnarray*}
\int_{\Om_{k, i}} |x-y|^{-p} e^{ \htu_k (y)}\, dy &=& \int_{\Om_{k, i} \cap B_{\frac{|x-x_{k, i}|}{2}} (x_{k, i}) } |x-y|^{-p} e^{ \htu_k (y)}\, dy\\
&&+
 \int_{\Om_{k, i} \backslash B_{\frac{|x-x_{k, i}|}{2}} (x_{k, i}) }|x-y|^{-p} e^{ \htu_k (y)}\, dy.
\end{eqnarray*}
Note that for $y \in \Om_{k, i} \backslash B_{\frac{|x-x_{k,
i}|}{2}} (x_{k, i})$, $ |x-y|^{-p} e^{ \htu_k (y)} \leq
\frac{C}{|x-y|^p |y-x_{k, i}|^4}$. Then Claim 6 and easy computations show
that
\[ \int_{B_{R} (0) \backslash  B_{ \frac{|x-x_{k, i}|}{2}} (x_{k, i})} \frac{1}{ |x-y|^p |y-x_{k, i}|^4 }\, dy \leq \frac{C}{|x-x_{k, i}|^p}.\]
Thus
\begin{equation}
\label{13}
 \left| \int_{\Om_{k, i} \backslash B_{\frac{|x-x_{k, i}|}{2}} (x_{k, i}) } |x-y|^{-p}e^{ \htu_k (y)}\, dy\right| \leq \frac{C}{|x-x_{k, i}|^p}.
\end{equation}
On the other hand,  for $y \in \Om_{k, i} \cap B_{\frac{|x-x_{k,i}|}{2}} (x_{k, i})$, we have $ |x-y| \geq |x-x_{k, i}| - |y-x_{k, i}| \geq \frac{1}{2} |x-x_{k, i}|$ and hence
\begin{equation}
\label{14}
 \left| \int_{\Om_{k, i} \cap  B_{\frac{|x-x_{k, i}|}{2}} (x_{k, i}) } |x-y|^{-p}e^{ \htu_k (y)}\, dy\right| \leq \frac{C}{|x-x_{k, i}|^p}.
\end{equation}
Combining (\ref{13}) and (\ref{14}), we obtain the desired
estimates. \end{proof}
\medskip\noindent{\bf Claim 8:}
 Let $x_i:= \lim_{k \to+\infty} x_{k, i} \in \bar{\Om}$ and $S:=\{ x_i, i=1,..., N\}$.
 Assume that $\lim_{k\to +\infty}\alpha_k=+\infty$. Then $\htu_k \to -\infty$ uniformly in $\bar{\Om}\backslash S$.
\begin{proof} Let $\delta>0$ small such that $\Om_\delta:=\Om \backslash \cup_{i=1}^N \overline{B}_\delta (x_i)$ is connected. Then $ |\nabla \htu_k (x) | \leq C (\Om_\delta)$ for $x \in \Omega_\delta$ by the representation formula (\ref{3}). Let $ x_\delta \in \partial \Om_\delta \cap \partial \Om$, then we have $\htu_k (x)=-\alpha_k$ and hence $ |\htu_k (x)+\alpha_k | \leq C$ for all $x \in \Om_\delta$. This implies that $ \htu_k \to -\infty$ uniformly.
\end{proof}
\medskip\noindent{\bf Claim 9:} Assume that $\lim_{k\to +\infty}\alpha_k=+\infty$. Then there exists $ \gamma_1, ..., \gamma_{N}
\geq 64 \pi^2 $ such that
$$\lim_{k \to +\infty} u_k (x)=\sum_{i=1}^{N}\gamma_i G(\cdot, x_i)\hbox{ in }C_{loc}^4 (\bar{\Om} \backslash S).$$
\begin{proof} Since $u_k$ satisfies
\[ \Delta^2 u_k= e^{-\alpha_k} e^{ u_k}\]
and $ u_k$ is bounded in $C^0_{loc} (\bar{\Om} \backslash S)$ by Claim 8, by standard regularity arguments we deduce that $ u_k \to \psi$ in $C^4 (\bar{\Om} \backslash S)$, where  $\psi \in C^4 (\bar{\Om} \backslash S)$. Thus,
 for $\delta>0$ small enough,
\[ u_k (x)= \int_\Om G(x, y) e^{ \htu_k (y)}\, dy= \sum_{i=1}^N \int_{B_\delta (x_i) \cap \Om} G(x, y) e^{ \htu_k (y)}\, dy + o(1).
\]
Since $ G(x, \cdot)$ is continuous in $\bar{\Om} \backslash \{x\}$, we get that
\[ \lim_{k \to +\infty} u_k (x)= \sum_{i=1}^N \gamma_i G(x, x_i)\]
where $\gamma_i:= \lim_{\delta\to 0}\lim_{k \to +\infty}
\int_{B_\delta (x_i) \cap \Om} e^{ \htu_k (y)}\, dy$. By Claims 4
and 5, $\gamma_i\geq 64 \pi^2 $. Then $\psi= \sum_{i=1}^N \gamma_i
G(x, x_i)$. So we get the result.\end{proof}
\medskip\noindent{\bf Claim 10:}
 Let $x_i:= \lim_{k \to+\infty} x_{k, i} \in \bar{\Om}$ and $S:=\{ x_i, i=1,..., N\}$.
 Assume that $\lim_{k\to +\infty}\alpha_k=\alpha_\infty\in\mathbb{R}$. Then $S\subset\partial\Omega$ and there exists $u\in C^4(\overline{\Omega})$ such that
 $\Delta^2u=e^{-\alpha_\infty}e^u$ in $\Omega$, $u=\partial_\nu
 u=0$ in $\partial\Omega$ and
 $$\lim_{k\to +\infty} u_k=u\hbox{ in }C^{4}_{loc}(\overline{\Omega}\setminus S).$$
 \begin{proof} Indeed, with \eqref{bnd:L1}, we get that $\Vert \htu_k\Vert_{L^1(\Omega)}\leq
 C$ for all $k\in\mathbb{N}$. It then follows from Theorem 1.2 of
 \cite{robert} that there exists $\htu\in C^4(\Omega)$ such that $\lim_{k\to
 +\infty}\htu_k=\htu$ in $C^3_{loc}(\Omega)$. Therefore $S\subset \partial \Omega$.
 It then follows from Claims 6 and 7 and standard elliptic theory that there exists $u\in C^4(\overline{\Omega}\setminus S)$ such that
$$\lim_{k\to +\infty} u_k=u\hbox{ in }C^{4}_{loc}(\overline{\Omega}\setminus S).$$
 Moreover, passing to the limit $k\to +\infty$ in Claim 7, we get
 that
 $$\inf_{i=1,...,N}|x-x_i||\nabla u(x)|\leq C\hbox{ for all }x\in \Omega\setminus S.$$
We are left with proving that $u$ can be smoothly extended to $S$. We fix $x_0\in S$ and we let $\delta>0$ small enough such that
$$|x-x_0||\nabla u(x)|\leq C\hbox{ for all }x\in \Omega\cap B_\delta(x_0)\setminus\{x_0\}.$$
Therefore, there exists $C'>0$ such that for all $x,y\in
\Omega\cap B_\delta(x_0)\setminus\{x_0\}$ such that
$|x-x_0|=|y-x_0|$, we have that
$$|u(x)-u(y)|\leq C'.$$
Taking $y\in\partial\Omega$, we then get $|u(x)|\leq C'$ for all
$x\in \Omega\cap B_\delta(x_0)\setminus\{x_0\}$. Proceeding
similarly for all the points of $S$, we get that there exists
$C>0$ such that $|u(x)|\leq C$ for all $x\in\Omega\setminus
S$.\par
\medskip\noindent We let $w\in H^2_0(\Omega)$ such that $\Delta^2w=
e^{-\alpha_\infty}e^u$. (Since $|u| \leq C$, we may simply put $e^u=1$ when $ x=x_0$.)  It follows from standard theory that $w\in
C^3(\overline{\Omega})$ and that
$$w(x)=\int_\Omega G(x,y)e^{-\alpha_\infty}e^{u(y)}\, dy$$
for all $x\in\Omega$.
For  $\delta>0$ small enough and $x \in \bar{\Omega} \backslash S$,
\begin{equation}
\label{2n}
 u_k (x)= \int_\Om G(x, y) e^{ \htu_k (y)}\, dy=\int_{  (\bigcup_{i=1}^N B_\delta (x_i))^c \cap \Om} G(x, y) e^{ \htu_k (y)}\, dy + O(\delta).
\end{equation}

Passing to the limit (first in $k$ and then in $\delta$)  in \eqref{2n} and noting that $|u| \leq C$, we get
that
$$u(x)=\int_\Omega G(x,y)e^{-\alpha_\infty}e^{u(y)}\, dy$$
for all $x\in\bar{\Omega} \backslash S$. Therefore, $u\equiv w$ in $\bar{\Omega} \backslash S$ and $u$
can be extended smoothly as a $C^3-$function on
$\overline{\Omega}$. Coming back to the definition of $w$, we get
that $w$ is $C^4$ and then $u\in C^{4}(\overline{\Omega})$. This
ends the proof of Claim 10. As a remark, let us note that if the
concentration points were isolated (that is $x_i\neq x_j$ for all
$i\neq j$), the argument above would prove that $(u_k)$ is bounded
uniformly near the boundary, which would immediately exclude
boundary blow-up.\end{proof}

\medskip\noindent Now, we exclude the boundary blow-up in case $\lim_{k\to
+\infty}\alpha_k=+\infty$:\par
\medskip\noindent{\bf Claim 11:} Assume that $\lim_{k\to +\infty}\alpha_k=+\infty$. Let $x_0\in\partial\Om$. Then
$$\lim_{r \to 0} \lim_{k \to +\infty} \int_{B_r (x_0) \cap \Om}
e^{ \htu_k }\, dx=0.$$ In particular, $S\cap \partial
\Om=\emptyset$.
\begin{proof} We argue by contradiction and we let $x_0 \in \partial\Om\cap S$. Then \eqref{eq:1} yields
$$\lim_{r \to 0} \lim_{k \to +\infty} \int_{B_r (x_0) \cap \Om} e^{
\htu_k }\, dx \geq 64 \pi^2.$$
Thus for all $\delta>0$, we have that
\begin{equation}\label{lim:boundary}
\int_{B_\delta (x_0) \cap \Om} e^{ \htu_k }\, dx \geq 32 \pi^2
\end{equation}
for all $k\in\mathbb{N}$ large enough. Furthermore, we may assume
that $S \cap B_{\delta} (x_0)= \{ x_0\}$. Let $y_k:= x_0 + \rho_{k,
r} \nu (x_0)$ with
\begin{equation}
\rho_{k, r}= \frac{ \int_{\partial \Om \cap  B_{r} (x_0)} (x-x_0, \nu) (\Delta u_k)^2\, dx}{ \int_{\partial \Om \cap B_r (x_0)} (\nu (x_0), \nu) (\Delta u_k)^2\, dx}
\end{equation}
where $r << r_1$ such that $ \frac{1}{2} \leq (\nu (x_0) \cdot \nu) \leq 1$ for $x \in \bar{B}_r(x_0) \cap \Om$. Here $\nu(x)$ is the outer normal vector to $T_{x_0}\partial\Omega$ at $x$. Then it is easy to see that $ |\rho_{k, r}| \leq 2 r$ and
\begin{equation}
\label{6}
 \int_{\partial \Om \cap B_r (x_0)} (x- y_k, \nu) (\Delta u_k)^2 \, dx=0.
\end{equation}
Now applying the Pohozaev's identity in $\Om \cap B_r (x_0)$ with
$ y=y_k$, $f(u) = e^{-\alpha_k} e^{u_k}$ and $F(u)= e^{-\alpha_k} (e^{
u_k}-1)  $,  and using Dirichlet boundary condition and (\ref{6}),
we obtain that
\[   4 \int_{\Om \cap B_r (x_0)} (e^{ \htu_k}- e^{- \alpha_k} )\, dx=
 \int_{ \Om \cap  \partial B_r (x_0)}   \langle x-y_k, \nu \rangle (e^{-\alpha_k} e^{ u_k } - e^{-\alpha_k})\, d\sigma\]
\[
- 2 \int_{\Om \cap \partial B_r (x_0)}  \frac{\partial u_k}{\partial \nu}  \Delta u_k\, d\sigma\]
\[ + \int_{\Om \cap \partial B_r (x_0)} \Bigg[   \frac{1}{2} \langle x-y_k, \nu \rangle  (\Delta u_k)^2
 + \frac{\partial  (-\Delta u_k)}{\partial \nu}  <x-y_k, \nabla u_k> \Bigg]\, d\sigma
\]
\[+ \int_{\Om \cap \partial B_r (x_0)} \Bigg[ -
\frac{\partial}{\partial \nu}  u_k  \langle x-y_k, \nabla \Delta u_k\rangle +
 <\nabla u_k, \nabla \Delta u_k> <x-y_k, \nu>\Bigg]\, d\sigma.
\]
Note that $ u_k \to \Psi =\sum_{i=1}^N \gamma_i  G(x, x_i)$ in
$C^3 (\bar{\Om} \backslash S)$, where $G(x, x_0)=0$. Thus we
obtain that all the terms in the last three integrals are of the
form
\[  \lim_{k\to +\infty}\int_{\Om \cap \partial B_r (x_0)} \Bigg[  O(1) \Bigg]\, dx= O( r^3)\]
while
\[  \lim_{k\to +\infty}\int_{\partial \Om \cap B_r (x_0)}   (x-y_k, \nu) (e^{-\alpha_k} e^{u_k} - e^{- \alpha_k} )\,d\sigma= O(r^4).\]
Since $\lim_{k\to +\infty}\alpha_k=+\infty$, we thus obtain that
\begin{equation}
\left| \int_{\Om \cap B_r (x_0)} e^{\htu_k}\, dx\right|\leq C
r^3\end{equation} for $k\in\mathbb{N}$ large enough. Therefore,
$$\lim_{r\to 0}\lim_{k\to +\infty}\int_{\Om \cap B_r (x_0)} e^{\htu_k}\, dx=0.$$
A contradiction with \eqref{lim:boundary}. This proves Claim 11.
\end{proof}

\medskip\noindent{\bf Claim 12:} We have that
$$\lim_{k\to +\infty}\alpha_k=+\infty.$$
\begin{proof} We argue by contradiction and assume that, up to extracting a subsequence, $\lim_{k\to
+\infty}\alpha_k=\alpha_\infty\in\mathbb{R}$. We let $x_0\in
S\subset \partial\Omega$ (this follows from Claim 10). Arguing as
in Claim 11, we get that
\begin{eqnarray*}
&&4 \int_{\Om \cap B_r (x_0)} (e^{\htu_k}-e^{- \alpha_k}) \, dx=
 \int_{ \Om \cap  \partial B_r (x_0)}   \langle x-y_k, \nu \rangle (e^{-\alpha_k} e^{ u_k } - e^{-\alpha_k})\,  d
 \sigma\\
&&- 2 \int_{\Om \cap \partial B_r (x_0)}  \frac{\partial
u_k}{\partial \nu}  \Delta u_k\, d\sigma \\
&&+ \int_{\Om \cap
\partial B_r (x_0)} \Bigg[   \frac{1}{2} \langle x-y_k, \nu
\rangle  (\Delta u_k)^2
 + \frac{\partial  (-\Delta u_k)}{\partial \nu}  <x-y_k, \nabla u_k> \Bigg]\,
 d\sigma\\
&&+ \int_{\Om \cap \partial B_r (x_0)} \Bigg[ -
\frac{\partial}{\partial \nu}  u_k  \langle x-y_k, \nabla \Delta
u_k\rangle +
 <\nabla u_k, \nabla \Delta u_k> <x-y_k, \nu>\Bigg]\, d\sigma.
\end{eqnarray*}
Letting $k\to +\infty$, we then get with Claim 10 that
\begin{eqnarray}\label{lim:other}
&& 4 \times 32\pi^2 \leq 4\int_{\Om \cap B_r (x_0)} e^{- \alpha_k}
\, dx+
 \int_{ \Om \cap  \partial B_r (x_0)}   \langle x-y_\infty, \nu \rangle ( e^{ u -\alpha_\infty} - e^{-\alpha_\infty})\,  d
 \sigma\nonumber\\
&&- 2 \int_{\Om \cap \partial B_r (x_0)}  \frac{\partial
u}{\partial \nu}  \Delta u\, d\sigma \nonumber\\
&&+ \int_{\Om \cap
\partial B_r (x_0)} \Bigg[   \frac{1}{2} \langle x-y_\infty, \nu
\rangle  (\Delta u)^2
 + \frac{\partial  (-\Delta u)}{\partial \nu}  <x-y_\infty, \nabla u> \Bigg]\,
 d\sigma\nonumber\\
&&+ \int_{\Om \cap \partial B_r (x_0)} \Bigg[ -
\frac{\partial}{\partial \nu}  u  \langle x-y_\infty, \nabla
\Delta u\rangle +
 <\nabla u, \nabla \Delta u> <x-y_\infty, \nu>\Bigg]\,
 d\sigma\nonumber
\end{eqnarray}
for all $r>0$ small enough, where $y_\infty:=\lim_{k\to
+\infty}y_k$ depends on $r$ with $|y_\infty-x_0|\leq 2r$. With
Claim 10, we know that $u\in C^4(\overline{\Omega})$. Passing to
the limit $r\to 0$ above, we get that the RHS goes
to zero. A contradiction. Then $\lim_{k\to
+\infty}\alpha_k=+\infty$, and Claim 12 is proved.\end{proof}

\medskip \noindent {\bf Claim 13:}  $\gamma_i= 64 \pi^2, i=1, ..., N$.
\begin{proof} Since $x_i \in \Om$, the same proof as in Lemma
3.5 of Lin-Wei \cite{w} gives  the claim. We also refer to Druet-Robert
\cite{dr}.\end{proof}

\noindent {\bf Claim 14:}  The identity (\ref{id}) holds.
\begin{proof} The proof is exactly the same as that of Theorem 1.2 of Lin-Wei \cite{w} and as in Druet-Robert \cite{dr}. \end{proof} \medskip\noindent Theorem \ref{main} follows
form Claims 9-14.

\section{ Proof of Theorem \ref{main2}}
\setcounter{equation}{0} By Theorem \ref{main}, there are no
boundary bubbles for (\ref{1.1}). The proof of Theorem \ref{main2}
follows along the lines of Sections 3 and 4 of \cite{lw}: we just
need to change the Navier boundary condition to Dirichlet boundary
condition. Let us sketch the changes. We first choose a good
approximate function:  fix $P \in \Om$ and let
\begin{equation}
\label{2.2}
U_{\ep, P} (x) := \log \frac{\gamma \ep^{4} }{
(\ep^2+|x-P|^2)^{4}},
\end{equation}
where $\gamma:=3\cdot 2^7=384$. We consider the projection of
$U_{\ep, P}$:
\begin{equation}
\label{2.9} \left\{\begin{array}{l} \Delta^2 {\mathcal P}_\Om
U_{\ep, P} - e^{ U_{\ep, P}} =0 \ \mbox{in} \ \Om,\\ {\mathcal
P}_\Om U_{\ep, P}=  \partial_\nu {\mathcal P}_\Om U_{\ep, P} =0 \
\mbox{on} \
\partial \Om.
\end{array} \right.
\end{equation}
Set
\begin{equation}
\label{2.10}
{\mathcal P}_\Om U_{\ep, P} = U_{\ep, P} - \varphi_{\ep, P}
\end{equation}
Then $\varphi_{\ep, P}$ satisfies
\begin{equation}
\label{2.11}
\left\{\begin{array}{l}
\Delta^{2} \varphi_{\ep, P}  =0 \ \mbox{in} \ \Om, \\
  \varphi_{\ep, P}=U_{\ep, P}, \ \partial_\nu \varphi_{\ep, P} =
\partial_\nu U_{\ep,P}  \ \mbox{on} \ \ \partial \Om.
\end{array} \right.
\end{equation}
On $\partial \Om$, we have for $\ep$ sufficiently small
$$U_{\ep, P} (x) = \log (\gamma \ep^{4}) - 8 \log |x-P| -
\frac{4 \ep^2}{ |x-P|^2} + O(\ep^4)$$ uniformly in $C^{4}
(\partial \Om)$. Comparing (\ref{2.11}) with  (\ref{robin}) and
(\ref{2.12}), we have
\begin{eqnarray}
\label{2.13} \varphi_{\ep, P}  =  \log (\gamma \ep^{4}) - 64 \pi^2
R(x, P) - \ep^2 R_1 (x, P) + O(\ep^4), \ \mbox{in} \ \Om.
\end{eqnarray}
We now use ${\mathcal P}_\Om U_{\ep, P}$ as a test function to
compute an upper bound for $ c_{64 \pi^2 }$. Let $Q_0$ be such
that $R(Q_0, Q_0)=\max_{Q \in \Om} R(Q, Q)$.  Similar computations
in [page 799,\cite{lw}] yield
\[ J_{64 \pi^2} [ {\mathcal P}_\Omega U_{\epsilon, Q_0}]= A_0 -\frac{1}{2} (64\pi^2)^2 \max_{P \in \Om} R(P, P)\]
\[  -\frac{\ep^2}{2} \Bigg[ 64 \pi^2 R_1 (Q_0, Q_0) + \frac{ (64\pi^2)^2}{4} \Delta_x R(Q_0, Q_0)\Bigg] + o(\ep^2)\]
where $A_0$ is a generic constant. By our assumption (\ref{con}),
we have
\begin{equation}
\label{upper}
c_{64 \pi^2} <A_0 -\frac{1}{2} (64\pi^2)^2 \max_{P \in \Om} R(P, P).
\end{equation}
On the other hand, let $u_\rho$ be a minimizer of $J_\rho$ for
$\rho < 64 \pi^2$. If  $ u_{\rho}$ blows up as $\rho \to 64
\pi^2$, then a lower bound can be obtained by following exactly
the same computation in \cite{lw}:
\begin{equation}
\label{lower}
c_{64 \pi^2} \geq A_0 -\frac{1}{2} (64\pi^2)^2 \max_{P \in \Om} R(P, P).
\end{equation}
From (\ref{upper}) and (\ref{lower}), we deduce that blow-up does not occur. Then $u_\rho$ is uniformly bounded from above. It then follows from Lemma \ref{lem:cv} that $u_\rho$ converges to a minimizer of $J_{64\pi^2}$ when $\rho\to 64\pi^2$.\par
\medskip\noindent Finally, when $\Om$ is a ball, (without loss of generality, we may take $\Om=B_1 (0)$), by the result of Berchio, Gazzola and Weth \cite{bgw}, $u$ is radially symmetric and strictly decreasing. Here $Q_0=0$. Now, by the so-called Boggio's formula \cite{boggio}, we have
\begin{equation*}
 G(x, y)= \frac{1}{8 \pi^2} \int_1^{\frac{[x,y]}{|x-y|}} \frac{ (v^2-1)}{v^3}\, dv, \ \mbox{ where } \ [x, y]^2 = |x-y|^2 + (1-|x|^2)(1-|y|^2),
\end{equation*}
 for  $  x, y \in B_1(0)$.  Thus
\[ G(x, 0)= \frac{1}{8 \pi^2} \left( \log \frac{1}{|x|} +  \frac{|x|^2}{2} -\frac{1}{2}\right), R(x, 0)= \frac{1}{8 \pi^2} \left(   \frac{|x|^2}{2} -\frac{1}{2}\right),\]
and hence
\[ \Delta_x R(0, 0) = \frac{1}{2 \pi^2}>0.\]
It is easy to compute $R_1 (x, 0)= 4(2-|x|^2)$ and hence
\[ R_1 (0, 0) = 8 >0.\]
This shows that condition (\ref{con}) is satisfied.  Theorem \ref{main2} is thus proved.

\medskip\begin{center}
{\bf ACKNOWLEDGMENTS}
\end{center}

The research of the second author is partially supported by a
Competitive Earmarked Grant from RGC of HK. This work was carried
out while the first author was visiting the Chinese University of
Hong-Kong: he thanks this institution for its support and its
hospitality.

\end{document}